\newtheorem{thm}{Theorem}[section]
\newtheorem{lemma}[thm]{Lemma}
\theoremstyle{remark}
\newtheorem{remark}{Remark}[section]
\begin{document}
\bibliographystyle{abbrv}

\title{The isoperimetric inequality in Steady Ricci Solitons}
\keywords{isoperimetric inequality, Ricci soliton}
\author{Yuqiao Li}
\address{University of Science and Technology of China, No. 96, JinZhai Road Baohe District, Hefei, Anhui, 230026, P.R.China.} \email{lyq112@mail.ustc.edu.cn}
\thanks {2010 Mathematics Subject Classification 53C44}
\thanks{The research is supported by the National Nature Science Foundation of China No. 11721101 No. 11526212}

\begin{abstract}
We prove that the isoperimetric inequality is satisfied in the cigar steady soliton and in the Bryant steady soliton. Since both of them are Riemannian manifolds with warped product metric, we utilize the result of Guan-Li-Wang to get our conclusion. For the sake of the soliton structure, we believe that the geometric restrictions for manifolds in which the isoperimetric inequality holds are naturally satisfied for steady Ricci solitons.
\end{abstract}

\maketitle

\numberwithin{equation}{section}
\section{Introduction and statement of the main results}

Let $\Omega$ be a bounded domain in the two-dimensional Euclidean space. We know the isoperimetric inequality
\[ L^2\geq 4\pi A \]
holds, where $L$ and $A$ are the boundary length and the area of $\Omega$ respectively. Equality is attained only when $\Omega$ is a ball.

In $\mathbb{R}^n$, if $\Omega\subset\mathbb{R}^n$ is a bounded domain with boundary $\partial\Omega$ which is a $n-1$-dimensional hypersurface, then the corresponding  isoperimetric inequality reads
\[ Area(\partial\Omega)\geq c(n) (Vol(\Omega))^{\frac{n-1}{n}} \]
where $Area(\partial\Omega)$ denotes the $(n-1)$-Haussdorff measure of the hypersurface $\partial\Omega$ and $Vol(\Omega)$ is the $n$-dimensional volume of $\Omega$ and $c(n)$ is a constant depends only on the dimension $n$. Equality holds only when $\Omega$ is a ball.

Ricci soliton is a self-similar solution of the Ricci flow. It is obtained by a family of diffeomorphisms of the initial metric and satisfies the soliton equation
\[ -2Ric(g)=\mathcal{L}_Xg+\epsilon g \]
where $X$ is a vector field and $\epsilon$ is a constant. A Ricci soliton is called a steady soliton if $\epsilon=0$ and a gradient soliton if $X=\nabla f$ for some function $f$. We always write a triple $(M,g,f)$ to denote a gradient Ricci soliton and we say a Rimannian manifold $(M,g)$ has a gracient soliton structure if there is a function $f$ satisfies the soliton equation.

We consider the isoperimetric problem on manifolds that have a steady gradient Ricci soliton structure. Because of the Ricci soliton equation, we find that the isoperimetric inequality still holds in the cigar steady soiton and in the Bryant steady soliton. Our main theorems are the following.

\begin{thm}\label{2soliton}
  Let $(\mathbb{R}^2,g,f)$ be the cigar steady Ricci soliton. If $\gamma\subset\mathbb{R}^2$ is a graph over $\mathbb{S}^1$, the length of $\gamma$ and the area of the compact domain whose boundary is $\gamma$ are denoted by $L(\gamma)$ and $A(\gamma)$. Then
  \[ L(\gamma)\geq F(A(\gamma)) \]
  with equality holds if and only if $\gamma$ is a circle $\{r\}\times \mathbb{S}^1$, where $F$ is a single variable function which connects the relation between the length and the area of circles in the cigar (see Theorem \ref{2ineq} in section 2).
\end{thm}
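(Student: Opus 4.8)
The plan is to put the cigar into rotationally symmetric warped product form and then invoke the isoperimetric estimate of Guan--Li--Wang for star--shaped hypersurfaces in warped products.

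\emph{Step 1: the warped product form.} The cigar metric on $\mathbb{R}^2$ is $g=\dfrac{dr^2+r^2\,d\theta^2}{1+r^2}$. Writing $\rho$ for the geodesic distance to the tip, $\rho=\operatorname{arcsinh} r$ i.e. $r=\sinh\rho$, gives
\[ g = d\rho^2 + \phi(\rho)^2\,d\theta^2,\qquad \phi(\rho)=\tanh\rho, \]
so $(\mathbb{R}^2,g)$ is the warped product $[0,\infty)\times\mathbb{S}^1$ with warping function $\phi$. A curve $\gamma$ that is a graph over $\mathbb{S}^1$ is exactly a curve $\rho=u(\theta)$, $\theta\in\mathbb{S}^1$, that is, a star--shaped curve about the tip, which is precisely the class of hypersurfaces covered by the Guan--Li--Wang result.

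\emph{Step 2: checking the hypotheses.} I would verify that $\phi=\tanh$ meets the structural assumptions of the Guan--Li--Wang theorem: smooth closure at the tip, $\phi(0)=0$ and $\phi'(0)=1$; $\phi'(\rho)=\mathrm{sech}^2\rho>0$ for $\rho>0$; and the monotonicity $\big(\phi'/\phi\big)'\le 0$, equivalently $\phi\phi''\le(\phi')^2$, which here is $\big((\sinh\rho\cosh\rho)^{-1}\big)'\le 0$ and holds on all of $(0,\infty)$. The structural reason this works is the soliton equation: the Gaussian curvature $K=-\phi''/\phi=2\,\mathrm{sech}^2\rho$ is positive and strictly decreasing, and this monotonicity of $K$ — forced by the steady soliton identity $\operatorname{Ric}=-\nabla^2 f$ with $f=-2\log\cosh\rho$ — is exactly the geometric restriction that makes the coordinate circles isoperimetrically optimal and is "naturally satisfied" in the sense of the abstract.

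\emph{Step 3: identifying $F$ and the equality case.} For the coordinate circle $\{\rho=c\}$ one has length $\ell(c)=2\pi\phi(c)=2\pi\tanh c$ and enclosed area $a(c)=2\pi\int_0^c\phi(\rho)\,d\rho=2\pi\log\cosh c$; both are strictly increasing, so $F:=\ell\circ a^{-1}$ is well defined — explicitly $F(A)=2\pi\sqrt{1-e^{-A/\pi}}$, the function of Theorem~\ref{2ineq} — and for small $A$ one has $F(A)\sim 2\sqrt{\pi A}$, consistent with the Euclidean $L^2\ge 4\pi A$. The Guan--Li--Wang theorem then yields $L(\gamma)\ge F(A(\gamma))$ for every graph $\gamma$ over $\mathbb{S}^1$, together with the rigidity that equality forces $\gamma$ to be a coordinate circle $\{r\}\times\mathbb{S}^1$.

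\emph{Main obstacle.} The real work is Step 2: confirming that the specific warping function $\tanh$ satisfies \emph{all} hypotheses under which Guan--Li--Wang establish both the inequality and the rigidity, and in particular controlling the behaviour at the two ends of the noncompact interval $[0,\infty)$ — the tip $\rho=0$ and the cylindrical end $\rho\to\infty$, where $\phi\to 1$ and $K\to 0$. If some hypothesis of Guan--Li--Wang is stated over a compact base with $\phi$ on a bounded interval, one must justify the passage to the cigar, e.g. by exhausting $\mathbb{R}^2$ by the geodesic balls $\{\rho\le c\}$ and letting $c\to\infty$, checking that the bound $F(A)$ (which is bounded, $F(A)\uparrow 2\pi$) is stable in the limit; a direct first/second variation argument for stability of the coordinate circles can serve as a fallback there.
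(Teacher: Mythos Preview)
Your strategy coincides with the paper's: write the cigar as $g = ds^2 + \tanh^2 s\, d\theta^2$, verify the structural hypotheses on $\phi = \tanh$, and invoke the two--dimensional isoperimetric result (Theorem~\ref{2ineq}). One concrete omission: the result the paper applies in dimension two is Cant's, not Guan--Li--Wang's, and its hypothesis is the \emph{two--sided} bound $(\phi')^2 - \phi\phi'' \in [0,1]$. You check only the lower bound (via $(\phi'/\phi)' \le 0$, equivalently $(\phi')^2 - \phi\phi'' \ge 0$). The paper verifies the upper bound as well,
\[
(\phi')^2 - \phi\phi'' \;=\; \frac{2\cosh^2 s - 1}{\cosh^4 s} \;=\; 1 - \frac{(\cosh^2 s - 1)^2}{\cosh^4 s} \;\le\; 1,
\]
with strict inequality for $s>0$; and it is exactly the fact that $(\phi')^2 - \phi\phi'' \not\equiv 1$ that delivers the rigidity clause in Theorem~\ref{2ineq}. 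Your discussion of curvature monotonicity and the soliton equation, while suggestive, does not substitute for this computation.

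Your worry about the noncompact base interval is not something the paper engages with: Theorem~\ref{2ineq} is stated for $N=(0,R)\times\mathbb{S}^1$ and applied directly to the cigar, so the exhaustion/limit procedure you sketch is unnecessary for the argument as written.
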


\begin{thm}\label{3soliton}
  Let $(M,g,f)$ be the complete steady Bryant soliton with $M=(0,\infty)\times\mathbb{S}^{n-1}$, $g=dr^2+\phi(r)^2g_{\mathbb{S}^{n-1}}$. Let $\Sigma\subset M$ be a hypersurface which is defined as $r=\rho(p)$, $p\in\mathbb{S}^{n-1}$ for a smooth function $\rho$ on $\mathbb{S}^{n-1}$. Let $S(r_0)$ be the level set of $r$ and $\Omega$ be the domain bounded by $\Sigma$ and $S(r_0)$. Then
  \[ Area(\Sigma)\geq\xi(Vol(\Omega)) \]
  where $\xi$ is a well-defined single-valued function that relates the area and volume of spheres in $(M,g)$ (see Theorem \ref{3ineq} in section 2). Moreover, the equality is attained if and only if $\Sigma$ is a level set of $r$.
\end{thm}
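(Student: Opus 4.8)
The plan is to deduce Theorem \ref{3soliton} from the warped-product isoperimetric comparison of Guan-Li-Wang, reproduced in section 2 as Theorem \ref{3ineq}; the whole argument then reduces to checking that the warping function $\phi$ of the Bryant soliton satisfies the structural hypotheses of that theorem. I would first record the variational description of the two geometric quantities. Since $\Sigma=\{\,r=\rho(p):p\in\mathbb{S}^{n-1}\,\}$, the metric induced on $\Sigma$ is $d\rho\otimes d\rho+\phi(\rho)^2 g_{\mathbb{S}^{n-1}}$, so that
\[ Area(\Sigma)=\int_{\mathbb{S}^{n-1}}\phi(\rho)^{n-1}\sqrt{1+\phi(\rho)^{-2}\,|\nabla^{\mathbb{S}^{n-1}}\rho|^2}\;d\sigma,\qquad Vol(\Omega)=\int_{\mathbb{S}^{n-1}}\Bigl(\int_{r_0}^{\rho(p)}\phi(r)^{n-1}\,dr\Bigr)\,d\sigma. \]
With $r_0$ fixed and $\phi>0$ on $(0,\infty)$, the map $r\mapsto\omega_{n-1}\int_{r_0}^{r}\phi(s)^{n-1}\,ds$, where $\omega_{n-1}=Vol(\mathbb{S}^{n-1})$, is a strictly monotone diffeomorphism onto its image; writing its inverse as $V\mapsto r(V)$, the function appearing in the statement is $\xi(V)=\omega_{n-1}\,\phi(r(V))^{n-1}$, which is single-valued and smooth, and by construction a level set $S(r)$ realizes $Area(S(r))=\xi(Vol(\Omega))$ exactly when $\Omega$ is the region between $S(r_0)$ and $S(r)$. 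This is the configuration in which equality should hold.

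The heart of the matter is to read off from the steady gradient soliton structure the properties of $\phi$ that Theorem \ref{3ineq} needs. Writing $f=f(r)$, the equation $Ric(g)+\nabla^2 f=0$ reduces to the system $f''=(n-1)\phi''/\phi$ and $\phi\phi''+(n-2)\bigl((\phi')^2-1\bigr)=\phi\phi'f'$, supplemented by the smoothness data $\phi(0)=0$, $\phi'(0)=1$ at the pole and by the conserved quantity $R+|\nabla f|^2\equiv C$ with $C>0$. From this system I would extract: that $\phi'>0$ throughout $(0,\infty)$; that $\phi$ is strictly concave, that is, $\phi''<0$, equivalently $Ric(\partial_r,\partial_r)=-f''>0$; the monotonicity of the radial sectional curvature $-\phi''/\phi$ and its comparison with the spherical sectional curvature $(1-(\phi')^2)/\phi^2$, which are the conditions that enter the hypotheses of Theorem \ref{3ineq}; and that $\phi\to\infty$ while $\phi'\to0$, with the asymptotics $\phi(r)\sim c\sqrt r$. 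The concavity is immediate from the fact that $R$ is positive and decreasing (a standard property of the Bryant soliton), since then $|\nabla f|^2=C-R$ increases from $0$ at the pole, forcing $f'=-\sqrt{C-R}$ to decrease; the positivity of $\phi'$ then follows because $\phi'$ is decreasing while $\phi$ cannot vanish for $r>0$ by completeness. Since $\phi$ has no elementary closed form when $n\ge3$, these are qualitative ODE arguments, or appeals to the known structure of the Bryant soliton.

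With $\phi$ thus understood, the final step is to observe that the properties just listed are precisely the hypotheses of Theorem \ref{3ineq} --- the round factor automatically supplying $Ric_{\mathbb{S}^{n-1}}=(n-2)\,g_{\mathbb{S}^{n-1}}>0$ --- and to invoke that theorem: among the radial graphs $\Sigma$ that, together with $S(r_0)$, enclose a region of prescribed volume, the level sets of $r$ are the unique minimizers of area. This gives $Area(\Sigma)\ge\xi(Vol(\Omega))$ with equality if and only if $\Sigma$ is a level set of $r$, which is the statement of Theorem \ref{3soliton}. I expect the genuine difficulty to lie entirely in the previous paragraph: one must exclude the vanishing of $\phi'$, establish the concavity of $\phi$, and --- the most delicate point --- confirm the monotonicity and comparison conditions on the ambient sectional curvatures, using only the soliton system together with the asymptotic behaviour of $\phi$; once the warping function is controlled, the reduction and the appeal to Guan-Li-Wang are routine.
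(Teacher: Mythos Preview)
Your overall strategy matches the paper's exactly: reduce to the Guan--Li--Wang comparison (Theorem~\ref{3ineq}), note that the round factor gives $\tilde{Ric}=(n-2)\tilde g$ so $K=1$, and then verify the warping condition $0\le(\phi')^2-\phi\phi''\le 1$ for the Bryant profile. Your argument for the lower bound (positive radial sectional curvature forces $\phi''<0$, hence $(\phi')^2-\phi\phi''>0$) is also the paper's.

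Where your proposal stops short is precisely the point you yourself flag as ``the most delicate'': the upper bound $(\phi')^2-\phi\phi''\le 1$. You describe this only as ``qualitative ODE arguments, or appeals to the known structure of the Bryant soliton,'' and your mention of ``monotonicity of the radial sectional curvature'' is not actually one of the hypotheses of Theorem~\ref{3ineq}. The paper supplies a concrete mechanism here. It introduces the variables $x=\phi'$, $y=(n-1)\phi'-\phi f'$, and $dt=dr/\phi$, under which the soliton ODE becomes a planar autonomous system, and computes from the soliton equations that
\[
(\phi')^2-\phi\phi'' \;=\; xy-(n-2),
\]
so the required bound becomes $n-2\le xy\le n-1$. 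A further substitution $X=\sqrt{n-1}\,x/y$, $Y=\sqrt{(n-1)(n-2)}/y$ converts $xy\le n-1$ into $X/Y^2\le 1/\big((n-2)\alpha\big)$ with $\alpha=1/\sqrt{n-1}$, and this is proved by a maximum-principle/first-touching argument for the evolution of $X/Y^2$, using the initial data at the pole ($xy\to n-1$) and the known asymptotic $X/Y^2\to\alpha$. The strict inequality $xy<n-1$ away from the pole is what yields the equality characterization via the strict clause in Theorem~\ref{3ineq}. None of this is suggested by ``$R$ positive and decreasing'' alone; the substitutions are the actual content of the proof, and your outline would need to discover or reproduce them.
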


\hspace{0.4cm}

\noindent {\bf Acknowledgements}. I would like to express my gratitude to my advisor Professor Jiayu Li. He introduced me the problem and gave me so much useful suggestions.

\section{The isoperimetric inequality in Riemannian manifolds with warped product metric}

In the paper of Guan-Li-Wang \cite{Guan2019}, they proved an isoperimetric inequality by investigating a deformed mean curvature type flow which preserves the volume but decreases the area in manifolds with warped product metric. We will briefly state their results in this section.

Let $(\mathbb{B}^{n-1},\tilde{g})$ be a closed Riemannian manifold and $\phi=\phi(r)$ be a smooth positive function defined on the interval $[r_0,r_1]$ for some $r_0<r_1$. Consider a Riemannian manifold of warped product metric $(\mathbb{N}^n,\bar{g})$,
\begin{equation}\label{1}
  \bar{g}=dr^2+\phi^2\tilde{g},\ \ r\in[r_0,r_1]
\end{equation}
$X=\phi(r)\partial_r$ is a conformal Killing field of $\mathbb{N}^n$, i.e. $\mathcal{L}_X\bar{g}=2\phi'(r)\bar{g}$. Let $M$ be a smooth closed embedded hypersurface in $\mathbb{N}^n$ with an embedding $F_0$. Consider the following evolution equation for a family of embeddings of hypersurfaces with $F_0$ as an initial data,
\begin{equation}\label{2}
  \frac{\partial F}{\partial t}=((n-1)\phi'-uH)\nu
\end{equation}
where $\nu$ is the outward unit normal vector field, $H$ is the mean curvature, $u=<X,\nu>$. A hypersurface is said to be graphical if it is defined by $r=\rho(p)$, $p\in\mathbb{B}^{n-1}$ for a smooth function $\rho$ on $\mathbb{B}^{n-1}$.
In \cite{Guan2019}, they proved
\begin{thm}\label{3flow}
  Let $M_0$ be a smooth graphical hypersurface in $(\mathbb{N}^n,\bar{g})$ with $n\geq 3$ and $\bar{g}$ in \eqref{1}. If $\phi(r)$ and $\bar{g}$ satisfy the following conditions:
  \begin{equation}\label{3}
  \begin{aligned}
    &\tilde{Ric}\geq (n-2)K\tilde{g},\\
    &0\leq (\phi')^2-\phi''\phi\leq K\ \  on [r_0,r_1]
    \end{aligned}
  \end{equation}
  where $K>0$ is a constant and $\tilde{Ric}$ is the Ricci curvature of $\tilde{g}$. Then the evolution equation \eqref{2} with $M_0$ as the initial data has a smooth solution for $t\in[0,\infty)$. Moreover, the solution hypersurface converge exponentially to a level set of $r$ as $t\rightarrow\infty$.
\end{thm}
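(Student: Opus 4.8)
The plan is to run the standard program for a constrained curvature flow of this kind: reduce \eqref{2} to a scalar quasilinear parabolic equation, establish uniform a priori estimates of every order with the help of the hypotheses \eqref{3}, deduce long-time existence, and then prove convergence by combining the monotonicity built into the flow with a linearization argument.

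Since $M_0$ is graphical, write the flowing hypersurface as $r=\rho(p,t)$ over $\mathbb{B}^{n-1}$. Inserting this into \eqref{2} turns the normal speed $(n-1)\phi'-uH$ into a quasilinear second-order operator in $\rho$ whose principal part is a positive multiple of the graphical mean-curvature operator of $\bar g$, so $\rho$ satisfies a scalar quasilinear parabolic PDE on the closed manifold $\mathbb{B}^{n-1}$; parabolic theory yields a unique smooth solution on a maximal interval $[0,T^*)$. For the $C^0$ estimate, apply the maximum principle to $\rho$: at an interior spatial maximum the tangent plane is horizontal, so $\nu=\partial_r$, $u=\phi(\rho)$, and the nonpositivity of the spatial Hessian of $\rho$ there forces $H\ge(n-1)\phi'(\rho)/\phi(\rho)$, whence the speed $(n-1)\phi'(\rho)-\phi(\rho)H\le 0$; thus $\max_p\rho(\cdot,t)$ is nonincreasing and, symmetrically, $\min_p\rho(\cdot,t)$ is nondecreasing, so $\rho(\cdot,t)$ stays in the compact subinterval $[\min\rho_0,\max\rho_0]\subset[r_0,r_1]$ for every $t<T^*$.

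The crux is the gradient estimate, and it is precisely here that \eqref{3} is used. Let $v=\phi\,\langle X,\nu\rangle^{-1}=\sqrt{1+\phi^{-2}|\nabla^{\tilde g}\rho|^2}$ be the gradient function; deriving its evolution along \eqref{2} gives $\partial_t v=\mathcal{L}v+(\text{reaction})$, where $\mathcal{L}$ is linear and uniformly elliptic so long as $v$ stays bounded, and the reaction term is built from the Ricci curvature of $\mathbb{N}^n$ in the directions tangent and normal to $M_t$. Those curvature terms are controlled by $\tilde{Ric}\ge(n-2)K\tilde g$ together with the radial quantity $(\phi')^2-\phi''\phi$, and the two-sided bound $0\le(\phi')^2-\phi''\phi\le K$ is exactly what makes the reaction term have the sign needed for the maximum principle to give $v\le\max_{M_0}v$ on $[0,T^*)$. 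Hence the flow remains graphical and uniformly parabolic. With $C^0$ and $C^1$ in hand, bound $|A|^2$: its Simons-type evolution equation has reaction terms again controlled by \eqref{3}, the $C^0$-bounds on $\phi,\phi',\phi''$, and the gradient bound, so a maximum-principle argument gives $|A|\le C$ uniformly. Uniform parabolicity then permits Krylov--Safonov and Schauder estimates and a bootstrap to produce uniform-in-time $C^k$ bounds on $\rho$ for every $k$, and a standard continuation argument forces $T^*=\infty$.

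For convergence, a first-variation computation shows the flow keeps the enclosed volume fixed --- the normal speed equals $\mathrm{div}_{M_t}X^{T}$, using $\nabla X=\phi'\,\mathrm{Id}$ for $X=\phi\partial_r$ --- while the area is nonincreasing and is constant only when $M_t$ is a level set of $r$; since the area is bounded below it converges, and the uniform $C^\infty$ estimates let one extract a smooth subsequential limit, which must therefore be the level set enclosing the prescribed volume. To pass from this to exponential convergence of the entire flow, linearize \eqref{2} at a level set $r=c$: a short computation shows the zeroth-order term cancels identically, so the linearization is $\phi(c)^{-1}\Delta_{\tilde g}$ on $\mathbb{B}^{n-1}$, whose spectrum on the volume-preserving class (mean-value-zero functions) has a gap above $0$; hence once $M_t$ is $C^\infty$-close to a level set it converges to it exponentially, and together with the subconvergence this gives the stated exponential rate. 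I expect the gradient estimate --- getting the signs to close in the maximum-principle computation for $v$ --- to be the main obstacle, and the one step where the hypotheses \eqref{3} genuinely cannot be weakened.
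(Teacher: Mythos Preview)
The paper does not prove this theorem at all: it is stated verbatim as a result of Guan--Li--Wang \cite{Guan2019} (introduced by ``In \cite{Guan2019}, they proved'') and is used as a black box to deduce the isoperimetric inequality in Theorem~\ref{3ineq}. There is therefore no proof in the paper to compare against.

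That said, your sketch is essentially the Guan--Li--Wang argument: reduce to a scalar quasilinear parabolic equation for $\rho$, pin down $C^0$ via the maximum principle (your observation that $\max\rho$ is nonincreasing and $\min\rho$ nondecreasing is exactly their barrier), obtain the gradient bound by a maximum-principle computation on the support function $u=\langle X,\nu\rangle$ (or equivalently on $v$), where the hypotheses \eqref{3} enter to give the reaction term the right sign, then bound $|A|$, bootstrap via Krylov--Safonov/Schauder, and conclude long-time existence. The volume-preserving/area-decreasing structure you identify (via $\mathrm{div}_{M_t}X^{T}=(n-1)\phi'-uH$ and the Minkowski identity) is precisely what drives convergence, and the exponential rate comes from linearizing at a slice. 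Your identification of the gradient estimate as the step that genuinely consumes both inequalities in \eqref{3} is correct; one small caution is that the linearized operator at a slice is $\phi(c)^{-2}\Delta_{\tilde g}$ rather than $\phi(c)^{-1}\Delta_{\tilde g}$, but this does not affect the argument.
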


From the long-time existence of the flow \eqref{2}, they get an isoperimetric inequality for warped product space. Let $S(r)$ be a level set of $r$ and $B(r)$ be the bounded domain enclosed by $S(r)$ and $S(r_0)$. The volume of $B(r)$ and surface area of $S(r)$ are denoted by $V(r)$ and $A(r)$, respectively. There is a well-defined single variable function $\xi(x)$ that satisfies
\begin{equation}\label{4}
  A(r)=\xi(V(r))
\end{equation}
for any $r\in [r_0,r_1]$.

\begin{thm}\label{3ineq}
  Let $\Omega\subset\mathbb{N}^n$ be a domain bounded by a smooth graphical hypersurface $M$ and $S(r_0)$. Assume \eqref{3}, then
  \begin{equation}\label{5}
    Area(M)\geq \xi(Vol(\Omega))
  \end{equation}
  where $Area(M)$ is the area of $M$ and $Vol(\Omega)$ is the volume of $\Omega$. If either $(\phi')^2-\phi''\phi<K$ or $\tilde{Ric}>(n-2)K\tilde{g}$ on $[r_0,r_1]$ then $"="$ is attained in \eqref{5} if and only if $M$ is a level set of $r$.
\end{thm}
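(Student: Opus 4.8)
The plan is to deform the given hypersurface $M$ by the flow \eqref{2} with $M_0=M$ as initial data, and to control the enclosed volume and the area along the deformation. By Theorem \ref{3flow}, the hypotheses \eqref{3} guarantee that this flow exists for all $t\in[0,\infty)$, stays graphical (so that $u>0$ is preserved), and converges exponentially to some level set $S(r_\infty)$ of $r$; convergence of the graph functions in the smooth topology then gives $Area(M_t)\to Area(S(r_\infty))=A(r_\infty)$ and $Vol(\Omega_t)\to V(r_\infty)$, where $\Omega_t$ is the region bounded by $M_t$ and $S(r_0)$, so $\Omega_0=\Omega$.

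The first step is to show that the flow preserves $Vol(\Omega_t)$. Since $X=\phi\partial_r$ is conformal Killing with $\mathcal L_X\bar g=2\phi'\bar g$, one has $\mathrm{div}_{\mathbb N^n}X=n\phi'$ and $\langle\bar\nabla_\nu X,\nu\rangle=\phi'$, so on any closed hypersurface $M_t$
\[ \mathrm{div}_{M_t}(X^{\top})=\mathrm{div}_{\mathbb N^n}X-\langle\bar\nabla_\nu X,\nu\rangle-uH=(n-1)\phi'-uH . \]
Integrating over the closed $M_t$ and applying the divergence theorem gives $\int_{M_t}\big((n-1)\phi'-uH\big)\,dA=0$, which is exactly $\frac{d}{dt}Vol(\Omega_t)=\int_{M_t}\langle\partial_tF,\nu\rangle\,dA=0$. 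Hence $Vol(\Omega_t)\equiv Vol(\Omega)$ for all $t$, and letting $t\to\infty$ yields $Vol(\Omega)=V(r_\infty)$.

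The heart of the proof is the monotonicity $\frac{d}{dt}Area(M_t)\le 0$. The first variation of area along \eqref{2} is
\[ \frac{d}{dt}Area(M_t)=\int_{M_t}H\big((n-1)\phi'-uH\big)\,dA=\int_{M_t}\big((n-1)\phi'H-uH^2\big)\,dA . \]
To handle the term $\int_{M_t}(n-1)\phi'H$, I would establish a second Minkowski-type identity for the warped product $(\mathbb N^n,\bar g)$, schematically
\[ \int_{M_t}(n-2)\phi'H\,dA=2\int_{M_t}\sigma_2\,u\,dA+\mathcal C(M_t), \]
where $\sigma_2$ is the second elementary symmetric function of the principal curvatures of $M_t$ and $\mathcal C(M_t)$ is a correction term built from the ambient curvature; the point is that on graphical leaves $\mathcal C(M_t)\le 0$ under \eqref{3} — the bound $(\phi')^2-\phi''\phi\le K$ controls the curvature in the $\partial_r$ direction and $\tilde{Ric}\ge (n-2)K\tilde g$ the tangential part. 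Feeding this back in and using $u>0$ together with the Newton--MacLaurin inequality $H^2\ge\frac{2(n-1)}{n-2}\sigma_2$ gives
\[ \frac{d}{dt}Area(M_t)=\int_{M_t}u\Big(\tfrac{2(n-1)}{n-2}\sigma_2-H^2\Big)\,dA+\tfrac{n-1}{n-2}\,\mathcal C(M_t)\le 0 . \]
I expect the derivation of this Minkowski-type identity, together with the verification that \eqref{3} forces the right sign for $\mathcal C(M_t)$ — via commuting covariant derivatives on $M_t$ and integrating by parts, or equivalently via a Reilly-type formula on $\Omega_t$ — to be the main obstacle.

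Granting the monotonicity, \eqref{5} follows immediately: $Area(M)=Area(M_0)\ge\lim_{t\to\infty}Area(M_t)=A(r_\infty)=\xi(V(r_\infty))=\xi(Vol(\Omega))$, using \eqref{4} and the volume identity from the second paragraph. For the rigidity statement, suppose equality holds in \eqref{5}. Then $Area(M_t)$ is constant in $t$, so the displayed monotonicity must be an equality for every $t$; since $u>0$, this forces $\mathcal C(M_t)\equiv 0$ and equality in Newton--MacLaurin simultaneously, the latter meaning that $M_t$ is totally umbilic. When either $(\phi')^2-\phi''\phi<K$ or $\tilde{Ric}>(n-2)K\tilde g$ holds on $[r_0,r_1]$, the vanishing of $\mathcal C(M_t)$ pins the umbilic leaf $M_t$ down to be a level set of $r$; letting $t\to 0$ then shows that $M$ itself is a level set, which completes the proof.
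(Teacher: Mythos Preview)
Your outline matches the method the paper attributes to Guan--Li--Wang: run the flow \eqref{2}, use the first Minkowski identity to show $Vol(\Omega_t)$ is constant, use a second Minkowski-type identity together with Newton--MacLaurin to show $Area(M_t)$ is nonincreasing, and pass to the limit via Theorem~\ref{3flow}. Note, however, that the paper does \emph{not} prove Theorem~\ref{3ineq} at all: it is quoted from \cite{Guan2019}, and the only ``proof'' in the paper is the one-line description that the flow ``preserves the volume but decreases the area.'' So there is nothing further to compare against in this manuscript; your sketch is a reasonable reconstruction of the cited argument, with the second Minkowski identity and the sign of the curvature correction $\mathcal C(M_t)$ correctly flagged as the nontrivial step that you have not carried out in detail.
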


For $n=2$, similar result is proved by Dylan Cant in \cite{Cant2016A}.

Let $N=(0,R)\times\mathbb{S}^1$ be a surface with metric $g=dr^2+\phi(r)^2d\theta^2$, $\phi(r)>0$.

\begin{thm}\label{2flow}
  Let $N^2$ be a warped product space with warp potential $\phi(r)$ satisfying $(\phi')^2-\phi\phi''\geq0$. If $\gamma_0\subset N$ is a smooth hypersurface, then there is a unique flow $\gamma(t)$ with \eqref{2} and $\gamma(0)=\gamma_0$.
\end{thm}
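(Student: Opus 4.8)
The plan is to reduce the geometric flow \eqref{2} --- which for $n=2$ reads $\partial_t F=(\phi'-uH)\nu$, $H$ being the (scalar) geodesic curvature of $\gamma(t)$ in $(N^2,g)$ --- to a single scalar quasilinear parabolic equation for a height function over $\mathbb{S}^1$, and then to quote the classical existence and uniqueness theory for such equations. Following the graphical set-up of Theorems \ref{3flow} and \ref{2soliton}, I would write $\gamma_0=\{(\rho_0(\theta),\theta):\theta\in\mathbb{S}^1\}$ with $\rho_0\in C^\infty(\mathbb{S}^1)$ and $0<\min\rho_0\le\max\rho_0<R$, and look for the flow in the form $\gamma(t)=\{(\rho(\theta,t),\theta)\}$. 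A direct computation in $g=dr^2+\phi(r)^2 d\theta^2$ gives the outward normal $\nu=\phi\,(\phi^2+\rho_\theta^2)^{-1/2}\big(\partial_r-\phi^{-2}\rho_\theta\,\partial_\theta\big)$, hence $u=\langle\phi\,\partial_r,\nu\rangle=\phi(\rho)^2\,(\phi(\rho)^2+\rho_\theta^2)^{-1/2}>0$, and expresses $H$ as a second-order quasilinear operator in $\rho$. Pairing \eqref{2} with $\nu$, using $\langle\partial_tF,\nu\rangle=\rho_t\,\phi(\rho)\,(\phi(\rho)^2+\rho_\theta^2)^{-1/2}$, turns the flow into
\[
\rho_t=\frac{\phi(\rho)^2}{\big(\phi(\rho)^2+\rho_\theta^2\big)^{3/2}}\,\rho_{\theta\theta}+G(\theta,\rho,\rho_\theta),
\]
where $G$ is smooth in its arguments and the forcing term $(n-1)\phi'=\phi'$ of \eqref{2} contributes only to $G$. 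The principal coefficient equals $u\,(\phi(\rho)^2+\rho_\theta^2)^{-1}$, so it is strictly positive whenever the graph stays in the interior of $N$; thus, for graphical curves, solving the displayed scalar equation is equivalent to solving \eqref{2}, and the equation is (locally uniformly) parabolic.

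The second step is the parabolic well-posedness. On any time interval along which $\gamma(t)$ remains in a fixed compact subset of the interior of $N$ --- so that $\phi(\rho)\ge c_0>0$ --- and $|\rho_\theta|$ stays bounded, the equation above is uniformly parabolic with smooth coefficients, and both conditions persist for a short time by continuity from $\rho_0$. The classical H\"older-space theory for scalar quasilinear parabolic equations on the circle (Ladyzhenskaya--Solonnikov--Ural'tseva, or a contraction-mapping argument on the linearisation) then yields a unique solution $\rho\in C^{2+\alpha,\,1+\alpha/2}(\mathbb{S}^1\times[0,T])$ for some $T>0$; smoothness of the coefficients bootstraps $\rho$ to be $C^\infty$ in space and time, and one obtains a maximal existence interval $[0,T_{\max})$ with $0<T_{\max}\le\infty$. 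This furnishes the flow $\gamma(t)$ of the statement.

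For uniqueness as a geometric flow I would argue as follows. Equation \eqref{2} prescribes only the normal velocity, so a tangential reparametrisation leaves the family of curves it defines unchanged; and any solution of \eqref{2} starting at $\gamma_0$ remains graphical over $\mathbb{S}^1$ for a short time, because $\langle\nu,\partial_r\rangle>0$ holds on $\gamma_0$ and this is an open condition. Such a solution therefore coincides, up to reparametrisation, with a solution of the scalar equation above, and is pinned down by the uniqueness already obtained. The step I expect to be the main obstacle is the bookkeeping that produces the precise principal part $H(\rho)$ in the warped metric and verifies its sign: this is exactly where the graphical hypothesis enters, through $u>0$, so it is natural to read ``smooth hypersurface'' in the statement as ``graph over $\mathbb{S}^1$'', consistently with Theorems \ref{3flow} and \ref{2soliton}. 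The curvature condition $(\phi')^2-\phi\phi''\ge0$ is not needed for this short-time assertion; it is the hypothesis one would invoke afterwards to keep $u$ bounded away from $0$ and $|\rho_\theta|$ controlled --- that is, to extend the maximal solution and, ultimately, to prove the isoperimetric inequality behind Theorem \ref{2soliton}.
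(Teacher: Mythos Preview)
The paper does not supply its own proof of this theorem: Section~2 is explicitly a summary of results from \cite{Guan2019} and \cite{Cant2016A}, and Theorem~\ref{2flow} is quoted from Cant's paper without argument. So there is no in-paper proof to compare your proposal against.

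That said, your reduction to a scalar quasilinear parabolic equation for the graph function $\rho(\theta,t)$ is the standard mechanism and is presumably what the cited source does. One point to flag: as stated, Theorem~\ref{2flow} is the $n=2$ companion to Theorem~\ref{3flow}, which asserts \emph{long-time} existence on $[0,\infty)$; read in that parallel, ``there is a unique flow'' should mean global existence, not merely a maximal short-time interval. Your proposal delivers only short-time existence and uniqueness, and you yourself note that the hypothesis $(\phi')^2-\phi\phi''\ge 0$ plays no role in that step. The curvature condition is precisely what one uses (via a maximum-principle argument on $u=\langle X,\nu\rangle$, or equivalently on the gradient of the height function) to keep the equation uniformly parabolic and preclude finite-time blow-up; without carrying that step out, the argument stops short of the full statement. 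Your reading of ``smooth hypersurface'' as ``radial graph'' is consistent with the surrounding theorems and is likely the intended hypothesis.
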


Let $C_r$ denote the circle $\{r\}\times\mathbb{S}^1\subset N$, and $L(r)$ and $A(r)$ denote its length and area. There is some function $F$ with $L(r)=F(A(r))$.

\begin{thm}\label{2ineq}
  If $\gamma_0\subset N$ is a piecewise $C^1$ Lipschitz radial graph and $(\phi')^2-\phi\phi''\in[0,1]$, then $L(\gamma_0)\geq F(A(\gamma_0))$, where equality holds if and only if $\gamma_0$ is a circle $\{r\}\times\mathbb{S}^1$ if $(\phi')^2-\phi\phi''\not\equiv 1$.
\end{thm}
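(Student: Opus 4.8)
\noindent\emph{Proof proposal.} The plan is to deform $\gamma_0$ by the flow \eqref{2} (which for $n=2$ reads $\partial_t F=(\phi'-uH)\nu$), to show that this deformation preserves the enclosed area and does not increase the length, and to show that it converges to a coordinate circle; comparing $\gamma_0$ with the limiting circle then yields the inequality, and inspecting the equality cases in the monotonicity yields the rigidity statement. It is convenient to reduce first to smooth initial curves. Writing a radial graph as $\gamma_0=\{(\rho_0(\theta),\theta):\theta\in\mathbb S^1\}$, its enclosed area is $A(\gamma_0)=\int_{\mathbb S^1}\Phi(\rho_0)\,d\theta$ with $\Phi(r)=\int_{r_0}^{r}\phi$, and its length is $L(\gamma_0)=\int_{\mathbb S^1}\sqrt{(\rho_0')^2+\phi(\rho_0)^2}\,d\theta$; moreover $F$ is smooth because $F(2\pi\Phi(r))=2\pi\phi(r)$ and $\Phi$ is a smooth increasing bijection onto its image. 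For Lipschitz $\rho_0$ the mollifications $\rho_0^k$ are smooth, converge uniformly to $\rho_0$ and have $(\rho_0^k)'\to\rho_0'$ a.e.\ with uniformly bounded derivative, so $A(\gamma_0^k)\to A(\gamma_0)$ and, by dominated convergence, $L(\gamma_0^k)\to L(\gamma_0)$. Hence it suffices to prove $L(\gamma_0)\ge F(A(\gamma_0))$ for smooth radial graphs and to treat the equality case separately.

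For smooth $\gamma_0$, let $\gamma(t)$ be the solution of \eqref{2} with $\gamma(0)=\gamma_0$, which exists for all $t\ge0$ by Theorem \ref{2flow} and stays graphical (the hypothesis $(\phi')^2-\phi\phi''\ge0$ is what keeps the flow parabolic and controls the gradient of the graph function); since the level sets $C_r$ are themselves solutions of \eqref{2}, the avoidance principle confines $\gamma(t)$ to a fixed compact subcylinder of $N$, so the closed-interval results apply. The monotonicity is then exactly as in Guan--Li--Wang \cite{Guan2019}: because $X=\phi\partial_r$ is conformal Killing with $\mathcal L_X\bar g=2\phi'\bar g$, the generalized Minkowski identity $\int_{\gamma(t)}\phi'\,ds=\int_{\gamma(t)}uH\,ds$ holds, hence $\frac{d}{dt}A(\gamma(t))=\int_{\gamma(t)}(\phi'-uH)\,ds=0$ and the enclosed area is conserved; and the first variation of length, combined with the bound $(\phi')^2-\phi\phi''\le1$ (the $K=1$, $n=2$ instance of \eqref{3}, the Ricci condition in \eqref{3} being vacuous in dimension $2$), gives $\frac{d}{dt}L(\gamma(t))\le0$.

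The remaining point, and in my view the main obstacle, is the asymptotic behaviour as $t\to\infty$: one must show that $\gamma(t)$ converges to some coordinate circle $C_{r_\infty}$. Theorem \ref{2flow} only asserts long-time existence, so this step requires uniform higher-order estimates for the graph function $\rho(\cdot,t)$ (starting from the gradient estimate implied by $(\phi')^2-\phi\phi''\ge0$), extraction of a subsequential limit which, being a stationary solution of \eqref{2}, must be a level set of $r$, and then an upgrade to full convergence — for instance via a {\L}ojasiewicz--Simon estimate, or by using conservation of the enclosed area together with the monotonicity of $L$ to pin the $\omega$-limit set down to a single circle; alternatively one may import Cant's analysis from \cite{Cant2016A}. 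Granting this, $A(\gamma_0)=A(\gamma(t))=A(C_{r_\infty})$, and by the length monotonicity $L(\gamma_0)\ge\lim_{t\to\infty}L(\gamma(t))=L(C_{r_\infty})=F\big(A(C_{r_\infty})\big)=F\big(A(\gamma_0)\big)$.

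Finally, the rigidity. Assume $(\phi')^2-\phi\phi''\not\equiv1$ and $L(\gamma_0)=F(A(\gamma_0))$; then $t\mapsto L(\gamma(t))$ is constant, so $\frac{d}{dt}L\equiv0$, and one must extract from the equality case of the Guan--Li--Wang length computation that $\gamma_0$ is a level set of $r$. In that computation the relevant pointwise inequality degenerates only where $(\phi')^2-\phi\phi''=1$ or where the curve is already a level set, so the only thing to rule out is a non-circular $\gamma_0$ whose $r$-range lies inside $\{(\phi')^2-\phi\phi''=1\}$; since $(\phi')^2-\phi\phi''\not\equiv1$ this set is a proper subinterval, and this is the one spot where I expect to need some extra care (one can also appeal to the rigidity in Cant \cite{Cant2016A}). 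The Lipschitz equality case reduces to the smooth one: if $\gamma_0$ had a corner, rounding it off would produce a smooth radial graph of strictly smaller length with arbitrarily small change in area, contradicting the smooth inequality together with the continuity of $F$; hence an extremal Lipschitz $\gamma_0$ is smooth, and the previous rigidity applies.
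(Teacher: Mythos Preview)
The paper does not contain a proof of this theorem: Section~2 is a survey, and Theorem~\ref{2ineq} is simply quoted from Cant~\cite{Cant2016A} (just as Theorems~\ref{3flow} and~\ref{3ineq} are quoted from Guan--Li--Wang~\cite{Guan2019}). So there is no in-paper argument to compare your proposal against.

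That said, your outline is exactly the flow strategy that underlies both \cite{Guan2019} and \cite{Cant2016A}: run the area-preserving, length-nonincreasing flow \eqref{2}, obtain long-time existence from Theorem~\ref{2flow}, prove convergence to a coordinate circle, and read off the inequality and rigidity. You have been honest about the two places where real work is needed --- the $t\to\infty$ convergence (Theorem~\ref{2flow} gives existence only) and the equality analysis when $(\phi')^2-\phi\phi''$ can equal $1$ on a subinterval --- and both of these are precisely the points handled in \cite{Cant2016A}. One small caution: your length-monotonicity sentence (``the first variation of length, combined with the bound $(\phi')^2-\phi\phi''\le1$, gives $\frac{d}{dt}L\le0$'') hides the actual computation; in the Guan--Li--Wang/Cant argument the inequality $\frac{d}{dt}L\le0$ comes from a Heintze--Karcher/Reilly-type identity and is not an immediate consequence of the first variation plus the curvature bound, so if you intend this as a self-contained proof you should spell that step out rather than assert it.
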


\section{Cigar steady soliton}

We wonder if the isoperimetric inequality is still available on Ricci solitons. First, we consider the steady solitons of dimension $n=2$.  We have the following Theorem in \cite{ChowThe} which implies that the complete steady gradient Ricci soliton with positive curvature is the cigar soliton.

\begin{thm}
  If $(M^2,g(t))$ is a complete steady gradient Ricci soliton with positive curvature, then $(M^2,g(t))$ is the cigar soliton.
\end{thm}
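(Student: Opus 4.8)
The plan is to show that a complete steady gradient soliton on a surface with positive curvature is forced to be rotationally symmetric, and then to integrate the resulting ordinary differential equation and recognize the unique complete solution with positive curvature as the cigar. Write $g=g(0)$ and let $f$ be the potential, so that in the convention of the soliton equation above $Ric+\nabla^2 f=0$; since in dimension two $Ric=\tfrac{R}{2}g$, this reads $\nabla^2 f=-\tfrac{R}{2}g$, and tracing gives $\Delta f=-R$. Under the hypothesis $R>0$ we then have $\Delta f<0$ and, since $\nabla^2(-f)=\tfrac{R}{2}g>0$, the function $-f$ is strictly geodesically convex. I would also record the standard steady-soliton identity $R+|\nabla f|^2=\mathrm{const}$ (obtained from the contracted second Bianchi identity together with $Ric=-\nabla^2 f$), so that $R$, like $|\nabla f|^2$, is constant on the level sets of $f$. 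Two quick reductions: $M$ is non-compact, for otherwise integrating $\Delta f=-R$ over the closed surface would give $0=-\int_M R<0$; and a complete non-compact surface of positive Gauss curvature is diffeomorphic to $\mathbb{R}^2$ (e.g. by the soul theorem, the soul being a point), so $M\cong\mathbb{R}^2$.

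Next I would show that $f$ has a critical point, which is then unique because $-f$ is strictly convex. Suppose not; then $\nabla f$ vanishes nowhere, so the function $t$ determined by $dt=|\nabla f|^{-1}df$ is well defined, has unit gradient, and its gradient trajectories are complete unit-speed geodesics. The level set $\{f=c\}$ has geodesic curvature $-\tfrac{R}{2|\nabla f|}$, which by the previous paragraph depends only on the value of $f$, hence only on $t$; this forces the metric into the warped-product form $g=dt^2+\phi(t)^2\tilde g$ over $\mathbb{R}$ with $\phi>0$ everywhere. But then $K=-\phi''/\phi$, so $R>0$ makes $\phi$ strictly concave on all of $\mathbb{R}$, which is impossible for a positive function (concavity together with a lower bound forces $\phi$ to be affine, hence $\phi''\equiv 0$). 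Thus a critical point $p_0$ exists.

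Now I would establish rotational symmetry about $p_0$. Along any unit-speed geodesic $\gamma$ issuing from $p_0$, and for any Jacobi field $J$ along $\gamma$ with $J(0)=0$ and $J\perp\gamma'$, put $w=\langle\nabla f,J\rangle$; using $\nabla^2 f=-\tfrac{R}{2}g$, the Jacobi equation, and $\dim M=2$ (so $R(J,\gamma')\gamma'=KJ$), one computes $w''+\tfrac{R}{2}w=0$ with $w(0)=w'(0)=0$, whence $w\equiv 0$. Therefore $\nabla f$ is everywhere radial, so in geodesic polar coordinates $g=dr^2+\phi(r,\theta)^2d\theta^2$ one has $f=f(r)$ on the complement of the cut locus. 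Inserting $f=f(r)$ into the two nontrivial components of $\nabla^2 f=-\tfrac{R}{2}g$ yields $f''=-\tfrac{R}{2}=\phi_{rr}/\phi$ and $\phi_r f'=\phi_{rr}$; eliminating $R$ gives $f''/f'=\phi_r/\phi$, and integrating in $r$, using that $f'$ is $\theta$-independent together with $\phi\to 0$, $\phi_r\to 1$ as $r\to 0$, forces $\phi$ to be independent of $\theta$ and $f'=-c\,\phi$ for a constant $c$. Substituting back gives $\phi_r=1-\tfrac{c}{2}\phi^2$ with $\phi(0)=0$, $\phi_r(0)=1$, and $R=2c\,\phi_r$; positivity of $R$ forces $c>0$, and the ODE integrates to $\phi(r)=\tfrac{1}{a}\tanh(ar)$ with $a=\sqrt{c/2}$. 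Since $\phi>0$ for all $r>0$ there is no conjugate point along any radial geodesic, so by the Cartan--Hadamard-type argument ($\exp_{p_0}$ is then a local isometry from the complete pullback metric on $T_{p_0}M$, hence a covering map, and $M$ is simply connected) $\exp_{p_0}$ is a diffeomorphism, the cut locus is empty, and $g=dr^2+\tfrac{1}{a^2}\tanh^2(ar)\,d\theta^2$ on all of $M\cong\mathbb{R}^2$, with $f=-2\log\cosh(ar)+\mathrm{const}$. Rescaling by $a$, this is exactly the cigar soliton.

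The step I expect to be the main obstacle is making the rotational-symmetry argument genuinely global: a priori the polar-coordinate identities hold only away from the cut locus, so one has to first prove the pointwise identity ``$\nabla f$ is radial'' along every geodesic from $p_0$ (which is precisely what the Jacobi-field reformulation delivers, since it never sees the cut locus), then solve the ODE, and finally use the explicit formula $\phi=\tfrac{1}{a}\tanh(ar)>0$ to exclude conjugate points, hence the cut locus, hence to justify the coordinates globally. Establishing the existence of the critical point is the other delicate point, but the concavity obstruction in the warped-product case settles it cleanly.
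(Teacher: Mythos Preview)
The paper does not give a proof of this theorem; it is quoted as a known classification result from \cite{ChowThe} and used only to justify focusing on the cigar as the two-dimensional model to which the warped-product isoperimetric inequality is then applied. There is therefore no argument in the paper to compare your proposal against.

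For what it is worth, your sketch is the standard proof (essentially Hamilton's, as presented in the cited reference): reduce the two-dimensional soliton equation to $\nabla^2 f=-\tfrac{R}{2}g$, use the first integral $R+|\nabla f|^2=\mathrm{const}$ to make the level sets of $f$ isoparametric, exclude the case of no critical point by the concavity obstruction on the warping function over $\mathbb{R}$, establish rotational symmetry about the unique critical point via the Jacobi-field identity $w''+\tfrac{R}{2}w=0$ with vanishing initial data, and integrate the resulting ODE $\phi'=1-\tfrac{c}{2}\phi^2$ to obtain $\phi(r)=a^{-1}\tanh(ar)$. The reasoning is correct, including the global step (no zeros of $\phi$ for $r>0$ $\Rightarrow$ no conjugate points $\Rightarrow$ $\exp_{p_0}$ is a covering map onto the simply connected $M$, hence a diffeomorphism). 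One small wording issue: in the no-critical-point step, a positive concave function on all of $\mathbb{R}$ is in fact forced to be \emph{constant}, not merely affine; your conclusion $\phi''\equiv 0$ (hence $R\equiv 0$, a contradiction) is unaffected.
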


Let $(\mathbb{R}^2,g)$ be a complete Riemannian manifold with complete metric $g=\frac{dx^2+dy^2}{1+x^2+y^2}$. In polar coordinates, we can write \[g=\frac{dr^2+r^2d\theta^2}{1+r^2}.\]
Let $s=\text{arcsinh} r=\log(r+\sqrt{1+r^2})$, then
\[ g=ds^2+\tanh^2sd\theta^2. \]
By the gradient steady Ricci soliton equation
\[ Ric(g)+\nabla^2f=0 \]
we have that $(\mathbb{R}^2,g)$ is a gradient steady Ricci soliton with potential function
\[ f(s)=-2\log(\cosh s) \]
and its curvature is
\[ Ric(g)=\frac{2}{\cosh^2s}g. \]
This soliton is said to be the Cigar steady soliton.

Corresponding to Theorem \ref{2flow} and Theorem \ref{2ineq}, in the Cigar steady soliton case,
\[\phi(s)=\tanh s\]
we can calculate directly to get
\[ \phi'(s)=\frac{1}{\cosh^2 s}, \ \  \phi''(s)=-2\frac{\sinh s}{\cosh^3 s}.  \]
Thus
\[ (\phi')^2-\phi\phi''=\cosh^{-4}s(1+2\sinh^2 s)>0  \]
and
\[ (\phi')^2-\phi\phi''=\frac{2\cosh^2s-1}{\cosh^4 s}=1-\frac{(\cosh^2-1)^2}{\cosh^4s}\leq 1. \]
Therefore, Theorem \ref{2ineq} implies that the isoperimetric inequality is still true for cigar steady soliton. Theorem \ref{2soliton} is concluded.

\section{Bryant steady soliton}
In this section, we focus on the gradient steady Ricci solitons which is radial symmetric with $n\geq 3$.
Let $g_{\mathbb{S}^{n-1}}$ be the standard metric on the unit $(n-1)$-sphere. We will search for gradient steady Ricci solitons on $(0,\infty)\times \mathbb{S}^{n-1}$ which extend to Ricci solitons on $\mathbb{R}^n$ by a one point compactification of one end.
Consider the metric
\[ g=dr^2+\phi^2(r)g_{\mathbb{S}^{n-1}} \]
its Ricci curvature is
\[ Ric(g)=-(n-1)\frac{\phi''}{\phi}dr^2+((n-2)(1-(\phi')^2)-\phi\phi'')g_{\mathbb{S}^{n-1}}. \]
The Hessian of a function $f$ with respect to $g$ is
\[ \nabla^2f(r)=f''(r)dr^2+\phi\phi'f'g_{\mathbb{S}^{n-1}}. \]
From the steady Ricci soliton equation
\[ Ric(g)+\nabla^2f=0 \]
we get the following ODE system
\[ f''=(n-1)\frac{\phi''}{\phi} \]
\[ \phi\phi'f'=-(n-2)(1-(\phi')^2)+\phi\phi''. \]
Making the transformations
\[ x=\phi',\ \ y=(n-1)\phi'-\phi f',\ \ dt=\frac{dr}{\phi} \]
the ODE system becomes
\begin{equation}\label{ode}
  \begin{aligned}
     & \frac{dx}{dt}=x(x-y)+(n-2), \\
     & \frac{dy}{dt}=x(y-(n-1)x)
  \end{aligned}
\end{equation}
We will check the condition \eqref{3}. Notice that the first assumption of \eqref{3} is naturally satisfied because $Ric(g_{\mathbb{S}^{n-1}})=(n-2)g_{\mathbb{S}^{n-1}}$ and $K=1$. We calculate
\begin{equation}\label{con3}
  \begin{split}
     (\phi')^2-\phi\phi''= & (\phi')^2-\phi\phi'f'-(n-2)(1-(\phi')^2) \\
       =& -\phi\phi'f'+(n-1)(\phi')^2-(n-2) \\
       =& xy-(n-1)x^2+(n-1)x^2-(n-2) \\
       =& xy-(n-2).
  \end{split}
\end{equation}
Thus, the second condition of \eqref{3} is equivalent to
\begin{equation}\label{con33}
  n-2\leq xy\leq n-1.
\end{equation}
We can see the ODE system has constant solutions $(1,n-1)$ and $(-1,-n+1)$. They both satisfy \eqref{con33}.

\begin{lemma} \cite{ChowThe}
  Let $0<L\leq\infty$ and let $g$ be a warped product metric on the topological cylinder $(0,L)\times\mathbb{S}^{n-1}$ of the form
  \[ g=dr^2+w(r)^2g_{\mathbb{S}^{n-1}}. \]
  Then $g$ extends to a smooth metric as $r\rightarrow0_+$ if and only if
  \[ \lim_{r\rightarrow0_+}w(r)=0, \]
  \[ \lim_{r\rightarrow0_+}w'(r)=1, \]
  \[ \lim_{r\rightarrow0_+}\frac{d^{2k}w}{dr^{2k}}(r)=0 \]
  for all $k\in \mathbb{N}$.
\end{lemma}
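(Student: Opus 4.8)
The plan is to reduce everything to a one-variable statement. By classical facts about one-sided smooth functions --- a smooth function on $[0,\delta)$ all of whose even-order derivatives vanish at $0$ extends to a smooth odd function, a smooth odd function $W$ satisfies $W(r)=rE(r)$ with $E$ smooth and even, and a smooth even function is a smooth function of its square --- the three displayed conditions are equivalent to the requirement that $w$ be the restriction to $r>0$ of $w(r)=r\,H(r^2)$ with $H$ smooth near $0$ and $H(0)=1$. So it suffices to show that $g$ extends to a smooth metric across $r=0$ if and only if $w$ has this form. When an extension exists, write $\bar M=\{p\}\cup((0,L)\times\mathbb{S}^{n-1})$ and $\bar g$ for the extended metric, and fix $\bar g$-normal coordinates $x\in\mathbb{R}^n$ at $p$; since the curves $r\mapsto(r,\theta_0)$ are unit-speed $\bar g$-geodesics which tend to $p$ as $r\to0_+$, they are precisely the radial geodesics of $\bar g$ from $p$, so this chart is ``polar coordinates'' for a diffeomorphism $v$ of $\mathbb{S}^{n-1}$ onto the unit sphere of $T_pM$, with $|x|=r$ and $\{|x|=r_0\}=\{r=r_0\}$, and $\bar g$ is smooth with $\bar g_{ij}(0)=\delta_{ij}$.

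For the ``if'' direction, suppose $w(r)=r\,H(r^2)$ as above. In Cartesian coordinates with $r=|x|$ we have $(dr^2)_{ij}=x_ix_j/|x|^2$, hence $r^2 g_{\mathbb{S}^{n-1}}=g_{\mathrm{eucl}}-dr^2$ has components $\delta_{ij}-x_ix_j/|x|^2$, and
\[
  g=dr^2+r^2 H(r^2)^2 g_{\mathbb{S}^{n-1}}=g_{\mathrm{eucl}}+\bigl(H(|x|^2)^2-1\bigr)\bigl(r^2 g_{\mathbb{S}^{n-1}}\bigr).
\]
Since $H(0)^2-1=0$, Hadamard's lemma gives $H(s)^2-1=s\,k(s)$ with $k$ smooth, so the components of $g-g_{\mathrm{eucl}}$ equal $k(|x|^2)\bigl(|x|^2\delta_{ij}-x_ix_j\bigr)$, which are smooth functions of $x$ vanishing at $0$. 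Thus $g$ is the restriction to the punctured ball of the smooth metric $g_{\mathrm{eucl}}+(g-g_{\mathrm{eucl}})$, which is positive definite near $0$ because it equals $g_{\mathrm{eucl}}$ there; gluing this chart to $(0,L)\times\mathbb{S}^{n-1}$ produces the required smooth extension.

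For the ``only if'' direction, assume $\bar g$ exists. The induced metric on the geodesic sphere $\{|x|=r_0\}=\{r=r_0\}$ is $w(r_0)^2 g_{\mathbb{S}^{n-1}}$, so its $\bar g$-volume is $w(r_0)^{n-1}\omega_{n-1}$, where $\omega_{n-1}=\mathrm{Vol}(\mathbb{S}^{n-1},g_{\mathbb{S}^{n-1}})$; but $\bar g_{ij}(x)=\delta_{ij}+O(|x|^2)$ forces that volume to be $\omega_{n-1}r_0^{n-1}\bigl(1+O(r_0^2)\bigr)$, whence $w(r_0)\to0$ and $w(r_0)/r_0\to1$ as $r_0\to0_+$. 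Next, at a point $x=t e_1$ on a coordinate axis and against the coordinate vector $\partial_{x_2}$, which is tangent to $\{|x|=|t|\}=\{r=|t|\}$, the chain rule through the polar substitution $x=rv(\theta)$ turns $\partial_{x_2}$ into $|t|^{-1}$ times a fixed tangent vector to $\mathbb{S}^{n-1}$, giving $\bar g(\partial_{x_2},\partial_{x_2})|_{te_1}=c\,w(|t|)^2/t^2$ for $0<|t|<\delta$, with $c$ constant on each side of $0$; letting $t\to0$ and using $\bar g_{22}(0)=1$ together with $w(s)/s\to1$ forces $c=1$ on both sides. Hence $\bar g_{22}(te_1)=w(|t|)^2/t^2$ is a smooth even function of $t$, so it equals $\Psi(t^2)$ for some smooth $\Psi$ with $\Psi(0)=1$; therefore $w(t)^2=t^2\Psi(t^2)$ and, taking the positive root, $w(t)=t\,H(t^2)$ with $H=\sqrt{\Psi}$ smooth near $0$ and $H(0)=1$. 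This is the desired form, so the proof is complete.

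The main obstacle is the ``only if'' direction, specifically pinning down $w'(0)=1$ rather than merely $w'(0)>0$. The pointwise computation on coordinate axes by itself only yields $\bar g_{22}(te_1)=c\,w(|t|)^2/t^2$ with an undetermined constant $c$, and the identification $c=1$ --- equivalently, the exclusion of conical models such as $g=dr^2+4r^2 g_{\mathbb{S}^{n-1}}$, which satisfy every condition except $w'(0)=1$ --- is exactly where one must use that the extension is a genuine manifold, topologically $\mathbb{R}^n$ near $p$; the volume comparison of small geodesic spheres with the Euclidean model is the step that encodes this. A related technicality, already used above, is checking that the radial curves $r\mapsto(r,\theta_0)$ do converge to the single added point as $r\to0_+$, which is what legitimizes treating the normal-coordinate chart as polar coordinates.
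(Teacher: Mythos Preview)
The paper does not supply its own proof of this lemma; it is quoted from \cite{ChowThe} and used as a black box, so there is no in-paper argument to compare yours against.

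Your argument is correct and follows the standard line. The reduction of the three displayed conditions to the normal form $w(r)=rH(r^{2})$ with $H$ smooth and $H(0)=1$ is right, the Cartesian-coordinate computation for the ``if'' direction is clean, and in the ``only if'' direction the extraction of $w(|t|)^{2}/t^{2}$ as a smooth even function from a single metric component in normal coordinates, combined with the small-sphere volume comparison to force $c=1$ and hence $w'(0)=1$, is exactly how one rules out cone points. The only point worth flagging is that your opening reduction tacitly assumes $w$ extends to a $C^{\infty}$ function on $[0,\delta)$, whereas the lemma as written imposes only limit conditions on $w$, $w'$, and the even-order derivatives. This is not a real gap---an elementary interpolation (if $f\to 0$ and $f''\to 0$ as $r\to 0^{+}$ then $f'\to 0$) applied inductively to $f=w^{(2k)}$ shows all one-sided derivatives exist, whence $w$ is smooth up to $0$---but it would be worth saying so.
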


Because of the above lemma, we need the metric to close up smoothly as $r\rightarrow0$, so
$x\rightarrow1$, $y\rightarrow n-1$. And $x\rightarrow0_+$, $y\rightarrow+\infty$ as $t$ increases. From \eqref{ode}, at the saddle point $(1,n-1)$, we can see that $x$ is decreasing and $y$ is increasing. Let
\begin{equation*}
  X=\sqrt{n-1}\frac{x}{y},\ \ Y=\frac{\sqrt{(n-1)(n-2)}}{y}, \ \ ds=ydt
\end{equation*}
then as $x\rightarrow0_+$, $y\rightarrow+\infty$, we see $X\rightarrow0_+$ decreasingly and $Y\rightarrow0_+$ decreasingly too.
The ODE system turns out to be
\begin{equation*}
  \begin{aligned}
  \frac{dX}{ds}=X^3-X+\alpha Y^2,\\
  \frac{dY}{ds}=Y(X^2-\alpha X)
  \end{aligned}
\end{equation*}
where $\alpha=\frac{1}{\sqrt{n-1}}$. We have the following lemma from \cite{ChowThe}.

\begin{lemma}
  We have
  \[ \lim_{s\rightarrow+\infty}\frac{X}{Y^2}=\alpha \]
  where $\alpha=\frac{1}{\sqrt{n-1}}$.
\end{lemma}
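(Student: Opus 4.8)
The plan is to follow the ratio $u = X/Y^{2}$ along the flow and show it converges to $\alpha$ by a scalar comparison (barrier) argument, using the already-established facts that $X(s)\to 0_+$ and $Y(s)\to 0_+$ monotonically as $s\to+\infty$, so that $X,Y>0$ and $u>0$ on a ray $[s_*,\infty)$.

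First I would compute the evolution of $u$. Differentiating and substituting $X' = X^{3}-X+\alpha Y^{2}$, $Y' = Y(X^{2}-\alpha X)$, one finds after simplification
\[
\frac{du}{ds}=\frac{X'}{Y^{2}}-\frac{2XY'}{Y^{3}}=\alpha+u(X^{2}-1)-2uX(X-\alpha)=\alpha-u\bigl(1-2\alpha X+X^{2}\bigr).
\]
Set $h(s)=1-2\alpha X(s)+X(s)^{2}=(X(s)-\alpha)^{2}+(1-\alpha^{2})$. Since $n\ge 3$ we have $\alpha=1/\sqrt{n-1}<1$, so $h(s)\ge 1-\alpha^{2}>0$ for all $s$, and since $X(s)\to 0$ we get $h(s)\to 1$ as $s\to+\infty$. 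Thus $u$ solves the scalar linear equation $u'=\alpha-h(s)u$ with $h$ positive and $h\to 1$.

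Next I would run the barrier argument. Fix $\epsilon\in(0,1)$ and choose $s_{0}$ with $1-\epsilon<h(s)<1+\epsilon$ for $s\ge s_{0}$. For such $s$: when $u=\alpha/(1-\epsilon)$ we have $h u>(1-\epsilon)\cdot\alpha/(1-\epsilon)=\alpha$, hence $u'<0$; when $u=\alpha/(1+\epsilon)$ we have $hu<\alpha$, hence $u'>0$. So the interval $I_{\epsilon}=[\alpha/(1+\epsilon),\,\alpha/(1-\epsilon)]$ is forward-invariant for $s\ge s_{0}$. Moreover, if $u>\alpha/(1-\epsilon)$ on some interval then there $u'\le\alpha-(1-\epsilon)u$, which is bounded above by a strictly negative constant as long as $u$ stays at distance $\ge\delta$ above $\alpha/(1-\epsilon)$; since $u>0$ cannot run off to $-\infty$, $u$ must descend into $I_{\epsilon}$ in finite time, and symmetrically from below. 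Hence $u(s)\in I_{\epsilon}$ for all large $s$, so $\alpha/(1+\epsilon)\le\liminf u\le\limsup u\le\alpha/(1-\epsilon)$; letting $\epsilon\to 0$ gives $\lim_{s\to+\infty}X/Y^{2}=\alpha$.

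The only point requiring care is the claim that $u$ actually reaches $I_{\epsilon}$ in finite time rather than approaching it from outside forever; this is exactly what the quantitative negative (resp. positive) bound on $u'$ away from $I_{\epsilon}$, together with the a priori positivity $u>0$, is for. Everything else — global existence of the trajectory on $[s_*,\infty)$ and the monotone decay $X,Y\to 0_+$ — is supplied by the preceding analysis of \eqref{ode}.
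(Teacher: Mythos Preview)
Your argument is correct. The paper itself does not prove this lemma: it merely records it as a result taken from the reference \cite{ChowThe}, so there is no proof in the paper to compare against directly.

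That said, the scalar ODE you derive,
\[
\frac{d}{ds}\!\left(\frac{X}{Y^{2}}\right)=\alpha-\frac{X}{Y^{2}}\bigl(X^{2}-2\alpha X+1\bigr),
\]
is precisely the identity the paper computes in the proof of the \emph{next} lemma (the upper bound $X/Y^{2}<1/((n-2)\alpha)$). You are therefore using the same basic tool that the paper uses, but you push it further: with the additional input $X(s)\to 0$, the coefficient $h(s)=X^{2}-2\alpha X+1$ tends to $1$, and your barrier/comparison argument for the linear scalar equation $u'=\alpha-h(s)u$ then forces $u\to\alpha$. The only external facts you invoke---global existence in $s$ and $X(s)\to 0_{+}$ monotonically---are exactly those recorded in the paper just before the lemma, so the argument is self-contained within the framework set up there.
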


Then,
\begin{equation*}
 \lim_{s\rightarrow+\infty} xy=(n-2)\sqrt{n-1}\lim_{s\rightarrow+\infty}\frac{X}{Y^2}=n-2.
\end{equation*}

\begin{lemma}
 We have
 \[ \frac{X}{Y^2}<\frac{1}{(n-2)\alpha}\]
 for $s>0$, where $\alpha=\frac{1}{\sqrt{n-1}}$. Therefore,
 \[ xy=\frac{X}{Y^2}(n-2)\sqrt{n-1}< n-1. \]
\end{lemma}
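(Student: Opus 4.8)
The plan is to control the single scalar quantity $\Psi := X/Y^{2}$ along the orbit and to show, by a first‑touching (barrier) argument, that it can never reach the threshold $c := \dfrac{1}{(n-2)\alpha}=\dfrac{\sqrt{n-1}}{n-2}$; since the paper has already recorded $xy = (n-2)\sqrt{n-1}\,\Psi$, the bound $\Psi < c$ is exactly the desired $xy < n-1$.

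First I would derive the scalar ODE for $\Psi$. Differentiating $\Psi = X Y^{-2}$ and inserting the system for $(X,Y)$ gives
\[ \frac{d\Psi}{ds} \;=\; \alpha \;-\; \Psi\,(X^{2}-2\alpha X+1). \]
(The same content in the original variables: with $P:=xy$ one finds $\dfrac{dP}{dt}=2x^{2}y-xy^{2}+(n-2)y-(n-1)x^{3}$, and restricting to the curve $\{P=n-1\}$, i.e. $y=(n-1)/x$, this collapses to $\dfrac{dP}{dt}=-\dfrac{n-1}{x}(x^{2}-1)^{2}\le 0$.) The useful identity is $X^{2}-2\alpha X+1=(X-\alpha)^{2}+(1-\alpha^{2})=(X-\alpha)^{2}+\dfrac{n-2}{n-1}$ together with $c\cdot\dfrac{n-2}{n-1}=\alpha$. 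Hence, at any parameter value $s$ with $\Psi(s)=c$ one gets
\[ \frac{d\Psi}{ds}\Big|_{s} \;=\; \alpha - c(X-\alpha)^{2} - c\,\frac{n-2}{n-1} \;=\; -\,c\,(X-\alpha)^{2}\;\le\;0, \]
with equality only when $X=\alpha$, which (together with $Y^{2}=1-\alpha^{2}$) is precisely the fixed point of the flow.

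Next I would analyse the orbit near that fixed point. Linearising \eqref{ode} at the saddle $(1,n-1)$ gives eigenvalues $2$ and $2-n$ with unstable eigendirection $(1,-(n-1))$; the branch of the unstable manifold relevant here — the one along which $x$ decreases and $y$ increases, as already observed — is, near the saddle, of the form $(x,y)=(1-\varepsilon,(n-1)(1+\varepsilon))+O(\varepsilon^{2})$, so $xy=(n-1)(1-\varepsilon^{2})+O(\varepsilon^{3})<n-1$, i.e. $\Psi<c$, for all small $\varepsilon>0$. Thus $\Psi<c$ for $s$ near $-\infty$ along the orbit. Combining this with the previous step, the level $\{\Psi=c\}$ is a one‑way barrier: if there were a first parameter value $s_{*}$ with $\Psi(s_{*})=c$, then $\Psi$ would be non‑decreasing as it approaches $s_{*}$ from the left, so $\dfrac{d\Psi}{ds}(s_{*})\ge 0$, which by the computation above forces $X(s_{*})=\alpha$ and hence that the orbit sits at the fixed point at the finite time $s_{*}$ — impossible by uniqueness, since a non‑constant orbit can only approach a fixed point as $s\to\pm\infty$. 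Therefore $\Psi(s)<c$ for all $s$ (in particular for $s>0$), and multiplying by $(n-2)\sqrt{n-1}$ yields $xy=\dfrac{X}{Y^{2}}(n-2)\sqrt{n-1}<n-1$.

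The main obstacle is this last, "soft" step rather than any computation: one must verify carefully that the orbit actually enters $\{\Psi<c\}$ right after leaving the saddle (this is the role of the unstable‑manifold analysis) and that it cannot return to the fixed point in finite parameter time, so that the first‑touching argument is legitimate. The rest is a direct differentiation plus the algebraic identity $c\cdot\dfrac{n-2}{n-1}=\alpha$.
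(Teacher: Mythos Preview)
Your argument is correct and follows essentially the same route as the paper: compute $\dfrac{d}{ds}\!\left(\dfrac{X}{Y^{2}}\right)=\alpha-\dfrac{X}{Y^{2}}\bigl((X-\alpha)^{2}+1-\alpha^{2}\bigr)$, use the identity $c\cdot\dfrac{n-2}{n-1}=\alpha$, and run a first-touching argument at the barrier $\Psi=c$. The only difference is cosmetic: where you invoke the fixed-point/uniqueness argument to exclude $X(s_{*})=\alpha$, the paper simply notes that $X$ is strictly decreasing along the orbit (so $X<\alpha$ for $s_{0}>0$), which gives $\dfrac{d\Psi}{ds}(s_{0})<0$ directly and makes your unstable-manifold analysis unnecessary.
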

\begin{proof}
  From the ODE system, we can calculate directly
  \begin{equation*}
    \begin{split}
       \frac{d}{ds}\left(\frac{X}{Y^2}\right)= & \frac{X^3-X+\alpha Y^2}{Y^2}-\frac{2X(X^2-\alpha X)}{Y^2} \\
        = & \alpha-\frac{X}{Y^2}(X^2-2\alpha X+1).
    \end{split}
  \end{equation*}
  Since as $s\rightarrow0$, $xy\rightarrow n-1$, we can see $\frac{X}{Y^2}\rightarrow \frac{1}{(n-2)\alpha}$. If $s_0>0$ is the first point such that, at $s_0$,
  \[ \frac{X}{Y^2}\geq\frac{1}{(n-2)\alpha} \]
  then, at $s_0$,
  \begin{equation*}
    \begin{split}
       \frac{d}{ds}\left(\frac{X}{Y^2}\right)= & \alpha-\frac{X}{Y^2}((X-\alpha)^2+1-\alpha^2) \\
        \leq & \alpha-\frac{1}{(n-2)\alpha}((X-\alpha)^2+1-\alpha^2)\\
        = & -\frac{1}{(n-2)\alpha}(X-\alpha)^2\\
        < & 0
    \end{split}
  \end{equation*}
  where the last step follows from the fact that $X$ is decreasing and so $X<\alpha$ for $s_0>0$.
  This tells us that $\frac{X}{Y^2}$ is decreasing at $s_0$. Then there is an $\epsilon>0$ small, such that $s_0-\epsilon>0$ and
  \[ \frac{X}{Y^2}(s_0-\epsilon)>\frac{X}{Y^2}(s_0)\geq\frac{1}{(n-2)\alpha} \]
  which contradicts to the fact that $s_0>0$ is the first point satisfying $\frac{X}{Y^2}\geq\frac{1}{(n-2)\alpha}$.
\end{proof}
Notice that the Bryant steady soliton has positive curvature away from the origin and the sectional curvature of the plane tangent to the radial direction is $-\frac{\phi''}{\phi}$, then we obtain that $\phi''<0$ and thus $(\phi')^2-\phi\phi''>0$. Therefore, the Bryant steady soliton satisfies the condition \eqref{3} of Theorem \ref{3flow}, which implies the isoperimetric inequality \eqref{5} by Theorem \ref{3ineq}. Theorem \ref{3soliton} is proved.

\begin{remark}
Ievy also constructed steady solitons on doubly warped product metric \cite{Ivey1994}. Let $(M^n,d\sigma^2)$ be a compact Einstein manifold with Einstein constant $\epsilon>0$, and $d\theta^2$ be the standard metric on $\mathbb{S}^{k}$. Consider the metric
\[ g=dr^2+f(r)^2d\theta^2+h(r)^2d\sigma^2 \]
on $\mathbb{R}\times\mathbb{S}^k\times M^n$. We do not know whether the isoperimetric inequality holds in this steady soliton and this is an open question.
\end{remark}
\bibliography{Iso-ineq-in-Ricci-solitons}

\end{document}